\documentclass[12pt]{article}

\usepackage[
    top=1in,
    bottom=1.2in,
    left=1.35in,
    right=1.35in
]{geometry}
\usepackage{xcolor}
\usepackage{amsmath,amsthm,amssymb}
\usepackage{xurl}
\usepackage{float}
\usepackage{needspace}
\usepackage{enumitem}
\usepackage{tikz}
\usepackage[hidelinks]{hyperref}

\setlist[itemize]{itemsep=3pt,parsep=0pt,topsep=3pt,partopsep=0pt}
\setlist[enumerate]{itemsep=3pt,parsep=0pt,topsep=3pt,partopsep=0pt}

\let\oldthebibliography\thebibliography
\renewcommand{\thebibliography}[1]{%
  \oldthebibliography{#1}%
  \setlength{\itemsep}{4pt}%
  \setlength{\parskip}{0pt}%
  \setlength{\parsep}{0pt}%
}

\tikzset{
    vtx/.style={draw,circle,very thin,white,fill=black,inner sep=0pt,text width=8pt},
    edg/.style={ultra thick},
    ellip/.style={very thin,black,fill=white}
}
\pgfdeclarelayer{fore}
\pgfdeclarelayer{forefore}
\pgfsetlayers{main,fore,forefore}

\newtheorem{theorem}{Theorem}[section]
\newtheorem{lemma}[theorem]{Lemma}
\newtheorem{corollary}[theorem]{Corollary}
\newtheorem{conjecture}[theorem]{Conjecture}
\newtheorem{proposition}[theorem]{Proposition}
\newtheorem{observation}[theorem]{Observation}
\newtheorem{question}[theorem]{Question}
\newtheorem{remark}[theorem]{Remark}
\newtheorem*{claim}{Claim}
\theoremstyle{definition}
\newtheorem{example}[theorem]{Example}
\theoremstyle{plain}

\newenvironment{claimproof}
  {\begin{proof}}
  {\end{proof}}

\title{Bounded diameter covering of $2$-colored complete bipartite graphs}
\author{Louis DeBiasio\thanks{Department of Mathematics, Miami University, Oxford, OH. \texttt{debiasld@miamioh.edu}. Research supported in part by AMS-Simons Research Enhancement Grants for PUI Faculty GR001015.} \and Andr\'as Gy\'arf\'as\thanks{Alfr\'ed R\'enyi Institute of Mathematics, Budapest, P.O. Box 127, Budapest, Hungary, H-1364.
\texttt{gyarfas.andras@renyi.hu}, \texttt{sarkozy.gabor@renyi.hu}}
  \and G\'{a}bor N. S\'ark\"ozy\footnotemark[2]
\thanks{Computer Science Department, Worcester Polytechnic Institute, Worcester, MA (corresponding author).} }
\date{\today}

\begin{document}
\maketitle
\begin{abstract}
Related to a bounded-diameter bipartite analogue of the
Henderson--Ryser~\cite{HE} conjecture,
DeBiasio, Kamel, McCourt, and Sheats~\cite{DKMS} proved that the vertices of every
$2$-colored complete bipartite graph can be covered by two
monochromatic subgraphs, each of diameter at most four. We improve this
bound on the diameter to the best possible value of {\em three}.
\end{abstract}

\section{Introduction}
For $2$-colorings of a graph $G$, we use the colors red (or $1$) and blue (or $2$), whichever notation is more convenient.
We denote the red subgraph by $G_1$ and the blue subgraph by $G_2$.
For example, $N_{G_1}(v)$ (or $N_1(v)$ when $G$ is clear from context) denotes the set of red neighbors of $v$.
We write $K_{m,n}$ for the complete bipartite graph with partite sets of sizes $m$ and $n$, respectively. We write $G[A,B]$ for a bipartite graph with partite sets $A$ and $B$, and $K[A,B]$ for the complete bipartite graph with partite sets $A$ and $B$.
Given a graph $G$ and a set $X\subseteq V(G)$, we say that the induced subgraph $G[X]$ has diameter at most $k$ if the distance between any two vertices of $X$ is at most $k$ in $G[X]$.

The famous Henderson--Ryser conjecture~\cite{HE} (usually referred
to simply as Ryser's conjecture) says that, for every integer
$r\geq2$, the vertex set of every $r$-edge-colored complete graph can
be covered by $r-1$ monochromatic connected components.  The case
$r=2$ is elementary, and the cases $3\leq r\leq5$ were proved by
Tuza~\cite{TU1,TU2}.  The conjecture remains open for $r\geq6$.

The following bipartite analogue of the Henderson--Ryser conjecture was proposed by Gy\'arf\'as and
Lehel; see~\cite{GY,CFGLT}.

\begin{conjecture}\label{conjbip}
Let $r$ be an integer with $r\geq 2$.  In every $r$-edge-coloring of a complete bipartite graph $G$, the set $V(G)$ can be covered by the vertex sets of $2r-2$ monochromatic connected components.
\end{conjecture}

It is straightforward to prove Conjecture~\ref{conjbip} with $2r-1$
in place of $2r-2$.  On the other hand, the number $2r-2$ cannot be
decreased; see~\cite[Proposition 1]{CFGLT}.  Chen, Fujita, Gy\'arf\'as,
Lehel, and T\'oth proved Conjecture~\ref{conjbip} for
$2\leq r\leq5$~\cite{CFGLT}; the conjecture remains open for
$r\geq6$.

Mili\v{c}evi\v{c}~\cite{MI2} proposed a strengthening of the Henderson--Ryser conjecture in which
the monochromatic covering subgraphs are required to have diameter bounded by a function of $r$ (and DeBiasio, Kamel,
McCourt, and Sheats \cite[Conjecture~21]{DKMS} proposed a stronger version where $f(r)$ is a constant not depending on $r$).  Pokrovskiy~\cite{P} recently proved that, for each fixed $r$, the Henderson--Ryser conjecture and Mili\v{c}evi\v{c}'s conjecture are equivalent.

There has recently been significant interest in bounded-diameter
versions of the Henderson--Ryser conjecture; see
\cite{DGHS,DKMS,EMMP,GS,GSW,MI1,MI2,P}.  In particular, DeBiasio, Kamel, McCourt, and Sheats \cite[Conjecture~21]{DKMS} proposed the following bounded-diameter strengthening of Conjecture \ref{conjbip}.

\begin{conjecture}\label{conjbipdiam}
For every $r\geq 2$, there is a smallest integer $f(r)$ such that in every $r$-edge-coloring of a complete bipartite graph $G$, the set $V(G)$ can be covered by the vertex sets of $2r-2$ monochromatic connected components, each of diameter at most $f(r)$.
\end{conjecture}

The finiteness of $f(r)$ is currently known for $r=2,3$:
DeBiasio, Kamel, McCourt, and Sheats \cite[Theorems~41 and~44]{DKMS} proved
$3\leq f(2)\leq 4$ and $3\leq f(3)\leq 6$ respectively.  We
completely settle the two-color case by proving that $f(2)=3$ (for the lower bound, see Example \ref{ex:lowerbound}); a result which can be stated more explicitly as follows.

\begin{theorem}\label{thm:main}
Every $2$-colored complete bipartite graph can be covered by two monochromatic subgraphs, each of diameter at most 3.  Consequently $f(2)=3$.
\end{theorem}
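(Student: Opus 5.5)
We work in $G=K[A,B]$ with a red/blue coloring and look for two monochromatic sets $X,Y$ with $X\cup Y=A\cup B$ such that $G_i[X]$ and $G_j[Y]$ (for suitable colors $i,j$) have diameter at most $3$. In a bipartite graph, diameter at most $3$ for a set $X$ meeting both sides means two things. Any two vertices of $X$ on the same side must have a common neighbor in $X$. Every $a\in X\cap A$ and $b\in X\cap B$ must be adjacent or joined by a path of length $3$ inside $X$. A convenient way to get this is a \emph{double star} or a \emph{biclique-like} structure. The set $N_i(a)\cup N_i(b)\cup\{a,b\}$ for a color-$i$ edge $ab$ has diameter at most $3$ in color $i$. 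Likewise, any $X$ such that $G_i[X]$ contains a spanning complete bipartite subgraph minus nothing, or such that every vertex of $X\cap A$ is $i$-adjacent to a fixed vertex of $X\cap B$ and vice versa, has diameter at most $3$. The plan is a case analysis driven by the structure of monochromatic components, reusing the known diameter-$4$ cover from \cite{DKMS} as a guide.

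\textbf{Step 1 (disconnected color).} Suppose some color, say red, is disconnected on $A\cup B$. Then there is a partition $A=A_1\cup A_2$, $B=B_1\cup B_2$ with no red edges between $A_1\cup B_1$ and $A_2\cup B_2$. Hence $K[A_1,B_2]$ and $K[A_2,B_1]$ are entirely blue. If, say, $A_1,B_2,A_2,B_1$ are all nonempty, blue contains the union of these two complete bipartite graphs. We then try to cover with the blue set $A_1\cup B_2$ (diameter $2$) and the blue set $A_2\cup B_1$ (diameter $2$). This already covers everything. Degenerate cases, where some part is empty, mean some vertex or set has all edges to one side blue. These are handled directly, since a vertex $v$ blue-complete to the other side gives a blue star plus leftovers.

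\textbf{Step 2 (both colors connected).} Here we seek a vertex-pair structure. Pick $a\in A$ and split $B=N_1(a)\cup N_2(a)$. Then split $A\setminus\{a\}$ according to how its vertices see $N_1(a)$ and $N_2(a)$. The natural candidate cover is a red set containing $a\cup N_1(a)$ together with all $A$-vertices having a red neighbor in $N_1(a)$, and a blue set built symmetrically. The diameter-$3$ condition fails exactly when two $B$-vertices in $N_1(a)$ have no common red neighbor, or when some $A$–$B$ pair has no short path. So we choose $a$ extremally, for example maximizing $|N_1(a)|$ or choosing a pair $a,b$ maximizing a monochromatic double-star neighborhood. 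We then argue that vertices not covered by the red double star around a red edge $ab$ force many blue edges. Those blue edges let the blue double star around a suitable blue edge $a'b'$ cover the rest. The key check is that the complement of a red double star $N_1(a)\cup N_1(b)$ consists of $A'=N_2(b)\setminus\{a\}$ and $B'=N_2(a)\setminus\{b\}$. These are blue to $b$ and $a$ respectively, so the blue set $\{a,b\}\cup A'\cup B'$ almost works: $a$ is blue-adjacent to all of $B'$ and $b$ to all of $A'$. The only problem is connecting $a$ to $b$, or an $A'$-vertex to a $B'$-vertex, within distance $3$. So a case split on the color of edges between $A'$ and $B'$ is needed.

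\textbf{Main obstacle.} The hard part is the last case split. Here $A'$ and $B'$ have mixed-color edges between them, and neither double star closes up to diameter $3$. I would try to resolve it by re-choosing the red edge $ab$ to minimize $|A'|+|B'|$. Then, if some $x\in A'$, $y\in B'$ form a red edge, show that swapping to the red edge $xy$ or $ay$ reduces the uncovered set, giving a contradiction, unless all $A'$–$B'$ edges are blue. If they are all blue, $K[A',B']$ is blue complete and the blue set $A'\cup B'$ together with $a$ and $b$ has diameter at most $3$, since $a$ reaches $A'$ via $B'$. Small cases where $A'$ or $B'$ is empty are handled separately by a single star. Finally, Example~\ref{ex:lowerbound} gives $f(2)\ge3$, yielding $f(2)=3$.
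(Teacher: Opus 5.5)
\textbf{There is a genuine gap.} Your Step~2 cannot be finished by refining the case split. Every cover you build there consists of one red and one blue subgraph: a red double star around $ab$ together with the blue set $\{a,b\}\cup A'\cup B'$, or a symmetric variant. No argument of that shape can work.

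The coloring of $K_{4,6}$ in Example~\ref{ex:ramsey} is connected in both colors, so it falls into your Step~2. Yet every good cover of it uses two subgraphs of the \emph{same} color, and it has no good cover by two double stars. Concretely, take the red edge $(1,b_{12})$. You get $A'=\{3,4\}$ and $B'=\{b_{23},b_{24},b_{34}\}$, and the edges between them are mixed. The vertices $b_{12},b_{34}$ have disjoint blue neighborhoods $\{3,4\}$ and $\{1,2\}$, so your blue set has diameter greater than $3$.

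$S_4$ acts transitively on the red edges, and the coloring is self-complementary, so every base edge gives the same picture. Minimizing $|A'|+|B'|$ therefore yields no contradiction. Swapping to a red edge $xy$ such as $(3,b_{34})$ just leaves $\{1,2\}$ and $\{b_{12},b_{14},b_{24}\}$ uncovered. Also, $ay$ is blue, not red, since $y\in B'=N_2(a)$.

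Requiring all $A'$--$B'$ edges to be blue is also far more than you need. The blue set already has diameter at most $3$ once every vertex of $A'$ has a blue neighbor in $B'$ and vice versa; this is Lemma~\ref{rootedcover}. The paper secures this weaker condition only after two moves:
\begin{enumerate}
\item It reduces a minimal counterexample (no special vertices, no equivalent vertices, no nested neighborhoods, each color $2$-reachable on some side) to Type~I and Type~II colorings.
\item It then chooses the base edge carefully, for example $a$ not in a mixed pair in the Type~II case, or otherwise falls back on nested neighborhoods.
\end{enumerate}

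The remaining strong Type~I colorings, which include Example~\ref{ex:ramsey}, need the idea your proposal is missing: a cover by two subgraphs of the same color, $G_1[A_1,B_1]$ and $G_1[A_2,B_2]$. The paper obtains these from the auxiliary graph $H^a$ on $A$, using a shortest odd cycle in it when $H^a$ is not bipartite (Theorem~\ref{type1q1}). Nothing in your Step~2 can produce a same-colored cover when both colors are connected.

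Your Step~1 is fine, as is the appeal to Example~\ref{ex:lowerbound} for $f(2)\ge 3$.
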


English, McCourt, Mattes, and Phillips~\cite{EMMP} proved that every $2$-colored complete multipartite graph with at least three parts can be covered by two monochromatic subgraphs of diameter at most three.
Theorem~\ref{thm:main} implies their result and gives the following
general statement.

\begin{corollary}\label{multipartitecover}
Every $2$-colored complete multipartite graph with at least two parts
admits a cover by two monochromatic subgraphs, each of diameter at most
three.
\end{corollary}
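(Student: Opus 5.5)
The plan is to reduce the multipartite case directly to Theorem~\ref{thm:main}. We pass to a spanning complete bipartite subgraph and then observe that adding edges cannot increase distances. The result of English, McCourt, Mattes, and Phillips~\cite{EMMP} is not needed, although it would cover the case of at least three parts.

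Let $G$ be a $2$-colored complete multipartite graph with parts $V_1,\dots,V_k$, where $k\ge 2$. Set $A=V_1$ and $B=V_2\cup\dots\cup V_k$. Every pair $a\in A$, $b\in B$ lies in different parts, so $ab\in E(G)$. Hence $H=K[A,B]$ is a spanning subgraph of $G$, and it inherits a $2$-coloring from $G$. For $k=2$ we simply have $H=G$.

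Apply Theorem~\ref{thm:main} to $H$. This gives colors $i,j\in\{1,2\}$ (possibly equal) and vertex sets $X,Y$ with $X\cup Y=V(H)=V(G)$. Moreover, $H_i[X]$ has diameter at most $3$, and so does $H_j[Y]$. The key (and only) observation is monotonicity. Since $E(H)\subseteq E(G)$ and colors are inherited, $H_i[X]$ is a spanning subgraph of $G_i[X]$. Therefore, for any $u,v\in X$,
\[
\mathrm{dist}_{G_i[X]}(u,v)\le \mathrm{dist}_{H_i[X]}(u,v)\le 3 .
\]
So $G_i[X]$ is a monochromatic subgraph of $G$ of diameter at most $3$, and likewise $G_j[Y]$. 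Together these two subgraphs cover $V(G)$.

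There is no real obstacle here. The one point to check is that the notion of ``diameter at most $k$'' fixed in the introduction refers to distances inside the induced monochromatic subgraph on the covering set. That definition is exactly what makes the monotonicity step valid, and the degenerate case where one covering set has a single vertex is handled trivially by the same argument.
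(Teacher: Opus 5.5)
Your proposal is correct and is essentially the argument the paper has in mind when it says Theorem~\ref{thm:main} implies the corollary. You merge all but one part into one side to obtain a spanning complete bipartite subgraph, apply the theorem, and observe that the two covering subgraphs remain monochromatic subgraphs of $G$ of diameter at most three. Your monotonicity step is harmless but not strictly needed, since the subgraphs produced for $K[A,B]$ are already subgraphs of $G$ and their diameter is intrinsic to them.
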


For the rest of the paper, we use the following notation.  We say that $(H_1,H_2)$ is a {\em good cover} of a $2$-colored complete bipartite graph $G=K[A,B]$ if $V(H_1)\cup V(H_2)=V(G)$, and for all $i\in [2]$, $H_i$ is a monochromatic subgraph of $G$ with diameter at most 3.

In the course of proving $f(2)\leq 4$, DeBiasio, Kamel, McCourt, and Sheats \cite[Theorem~22(i)]{DKMS} proved that any 2-colored complete bipartite graph can be covered by two monochromatic {\em trees} of diameter at most 4.
A {\em double star} is obtained by joining the centers of two stars (and we consider a star to be a double star). Every tree of diameter at most 3 is a double star. One may wonder if our result can be improved by requiring that the good cover consist of double stars, but it cannot.  Random colorings provide such examples (see \cite[Example 50]{DKMS}); we also give an explicit example in Section \ref{exes}.

The following useful observation shows that general bipartite graphs of diameter at most 3 can be considerably more complicated than double stars.  In particular, every dense enough random bipartite graph will have diameter 3 with high probability.
\begin{observation}\label{equicond} A bipartite graph $G[A,B]$ with at least one edge has diameter at most $3$ if and only if the distance between any two distinct vertices in the same partite class is two.
\end{observation}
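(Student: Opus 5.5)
Both directions follow directly from bipartiteness, and I would prove them separately.

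For the forward direction, assume $G[A,B]$ has diameter at most $3$, and take distinct vertices $u,v$ in the same partite class, say $A$. Then $d(u,v)$ is finite and at most $3$. Every path between two vertices of the same class has even length. Since $u\neq v$, the distance is positive, so $d(u,v)$ is an even number in $\{1,2,3\}$, which forces $d(u,v)=2$.

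For the reverse direction, assume every two distinct vertices in the same class are at distance $2$. It is enough to bound $d(a,b)\le 3$ for $a\in A$ and $b\in B$. Because $G$ has at least one edge, fix an edge $a'b'$ with $a'\in A$ and $b'\in B$.

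\emph{Step 1: $b$ has a neighbor.} If $b=b'$, then $a'$ is a neighbor. If $b\neq b'$, the hypothesis gives $d(b,b')=2$, so $b$ has a common neighbor with $b'$ and in particular has some neighbor $a''\in A$. Note that this step really uses the edge: if $|B|=1$ and the vertex of $B$ were isolated, the same-class condition would hold vacuously on $B$, yet the graph would be disconnected. This is why the hypothesis "at least one edge" is needed.

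\emph{Step 2: bound $d(a,b)$.} If $a=a''$, then $d(a,b)=1$. Otherwise $d(a,a'')=2$, and appending the edge $a''b$ to a shortest $a$–$a''$ path gives a walk of length $3$ from $a$ to $b$, so $d(a,b)\le 3$.

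Pairs in the same class are at distance $2$ by hypothesis, and cross pairs are at distance at most $3$ by Step 2, so the diameter is at most $3$. The only point needing care is the degenerate case in Step 1, where an isolated vertex must be ruled out; the edge hypothesis handles it as shown.
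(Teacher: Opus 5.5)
Your proof is correct. The necessity direction is the same parity argument the paper uses. For sufficiency you take a different route.

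The paper argues by contradiction. It takes a shortest path $a,b_1,a_1,b_2,a_2,\dots,b$ of length at least $5$ and notes that its initial segment forces $d(a,a_2)=4>2$. You instead bound $d(a,b)$ directly: you pick a neighbor $a''$ of $b$ and route through a common neighbor of $a$ and $a''$.

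The paper's version is a one-liner. However, it tacitly assumes that some path from $a$ to $b$ exists, and it never invokes the ``at least one edge'' hypothesis. Your direct argument establishes connectivity along the way. It also shows exactly where that hypothesis is needed: it supplies a neighbor of $b$ when $|B|=1$.

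This sharpens your side remark. The only graph satisfying the same-class condition but having diameter greater than $3$ is the edgeless graph with exactly one vertex in each class. If the vertex of $B$ is isolated, there are no edges at all, so the condition on $A$ forces $|A|\le 1$.
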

\begin{proof}
If the distance between two vertices in the same partite class is greater than $2$, then it is at least $4$, so the condition is necessary. For sufficiency, suppose that $a\in A$ and $b\in B$ are at distance at least $5$. Along a shortest path
\[
a,b_1,a_1,b_2,a_2,\dots,b,
\]
the vertices $a$ and $a_2$ have distance greater than $2$, a contradiction.
\end{proof}

We note that our proof of Theorem \ref{thm:main} actually shows that every 2-colored complete bipartite graph has a good cover with the additional property that at least one of the subgraphs in the cover has radius at most two (has a vertex with distance 2 from all other vertices). In fact, in all but Theorem \ref{type1q1} (Type 1 strong colorings), we prove that one can get a good cover with the additional property that at least one of the subgraphs in the cover is a double star (which has radius at most two and diameter at most three).  Furthermore, Example \ref{ex:ramsey} shows that it is not possible to guarantee that both subgraphs have radius at most two and diameter at most three. We leave the following question as a remaining challenge.

\begin{question}
Is it true that every $2$-colored complete bipartite graph can be covered by monochromatic subgraphs $H_1$ and $H_2$ where $H_1$ is a double star and $H_2$ has diameter at most three? 
\end{question}

\section{Three examples}\label{exes}

The following examples show that we cannot strengthen the definition of good cover in any of five different natural ways.

The first example, which appears as \cite[Example 48]{DKMS}, gives the lower bound $f(2)\ge3$.  We repeat it here for the convenience of the reader.

\begin{example}\label{ex:lowerbound}
Let $G=K[A,B]$, where
\[
A=\{a_1,a_2,a_3\}\text{ and }B=\{b_1,b_2,b_3,b_4\}.
\]
Color the edges of the path
$b_1,a_1,b_2,a_2,b_3,a_3,b_4$
red and all remaining edges blue. The blue graph is also a path $b_2,a_3,b_1,a_2,b_4,a_1,b_3.$
In any cover of $V(G)$ by two monochromatic subgraphs, one of the subgraphs contains at least four vertices and since it is a subgraph of a path, it has diameter at least 3.  Thus $f(2)\ge 3$.
\end{example}

The next example shows that we cannot require that both subgraphs in a good cover are monochromatic trees (i.e.~double stars), have radius at most two, or have different colors.

\begin{example}\label{ex:ramsey}
Consider $G=K[A,B]$ with
\[
A=\{1,2,3,4\} \text{ and } B=\{b_{ij}:1\le i<j\le4\}.
\]
For each $1\le i<j\le4$, color $(i,b_{ij})$ and $(j,b_{ij})$ red, and color every other edge blue. This self-complementary $2$-coloring of $K_{4,6}$ is familiar from bipartite Ramsey theory. It contains no monochromatic $K_{2,2}$, whereas neither $K_{3,7}$ nor $K_{5,5}$ admits a $2$-coloring with this property. It is also a strong Type I coloring in the terminology of Section~\ref{sec:typeI}, and it has the following two properties.
\begin{itemize}
\item[(i)] $G$ has no good cover by two double stars.
\item[(ii)] Every good cover of $G$ uses two subgraphs of the same color; a star and a six-cycle form such a cover.
\end{itemize}
\end{example}

\begin{proof}
For (i), a monochromatic double star has at most five vertices, and two monochromatic double stars with five vertices each must intersect. Therefore two monochromatic double stars cannot cover all ten vertices of $G$.

To prove (ii), let $(H_1,H_2)$ be a good cover of $G$ and set
\[
B_1=V(H_1)\cap B\text{ and }B_2=V(H_2)\cap B.
\]
By the definition of $H_i$, any two vertices $b,b'\in B_i$ are joined by a monochromatic path $b,a,b'$ with $a\in A$. Thus the index pairs corresponding to the vertices in $B_i$ form a pairwise intersecting family of $2$-element subsets of $\{1,2,3,4\}$; this remains true when the path is blue. Such a family is either a star, with at most three members, or a triangle. Since $B=B_1\cup B_2$, both $B_1$ and $B_2$ have exactly three elements and are disjoint. Their index pairs therefore partition the six pairs of $\{1,2,3,4\}$ into two pairwise intersecting families. The only such partition consists of a triangle and a star, which implies that the corresponding covering subgraphs have the same color (if their colors differed, then, depending on which subgraph was red, either the center of the star would be uncovered or at most two of the other three vertices would be covered). This proves (ii), and a star and a six-cycle of that color form a good cover of $G$.
\end{proof}

Finally the last example shows that we cannot require that both subgraphs in a good cover have the same color.

\begin{example}\label{ex:differentcolors}
Delete vertex $4$ from $A$ in Example~\ref{ex:ramsey} to obtain $G'$. Every good cover of $G'$ uses two subgraphs of different colors.
\end{example}

\begin{proof}
The vertices $b_{14},b_{24},b_{34}$ form a red-independent triangle, so two red subgraphs cannot cover all three of these vertices. Similarly, $b_{12},b_{13},b_{23}$ form a blue-independent triangle, so two blue subgraphs cannot cover all three of these vertices. Consequently, every good cover uses one subgraph of each color.
\end{proof}

\begin{remark}
We note that by blowing up vertices, all of the examples above can be made arbitrarily large while maintaining their essential properties.
\end{remark}

\section{Properties of a minimal counterexample}\label{sec:reductions}

Given $i\in [2]$, a pair of vertices $x,y$ in same partite class of $G$ is {\em $i$-independent} if $N_i(x)\cap N_i(y)=\emptyset$.  The pair is {\em doubly independent} if it is independent in both colors.
Similarly, $x,y$ is {\em $i$-covered} if $N_i(x)\cap N_i(y)\neq \emptyset$.  The pair is {\em doubly covered} if it is covered in both colors.  Furthermore, a vertex is {\em doubly covered} if it is incident to a doubly covered pair.  We say that $x,y$ is {\em mixed} if it is independent in one color and covered in the other.  If we additionally have a set $S$, we say that $x,y$ is {\em $i$-covered from $S$} if $N_i(x)\cap N_i(y)\cap S\neq\emptyset$.  Finally, a partite class is {\em $2$-reachable} in color $i\in [2]$ if every pair of its vertices is $i$-covered.

A vertex $v$ in a $2$-colored complete bipartite graph $G=K[A,B]$ is called a {\em special vertex} if all edges incident to $v$ have the same color.

\begin{lemma}\label{specialcover}
If a $2$-colored complete bipartite graph has a special vertex, then it has a good cover consisting of a double star and a star.
\end{lemma}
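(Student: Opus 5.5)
Without loss of generality let the special vertex be $v\in A$ with all edges at $v$ red. The red star centered at $v$ covers $\{v\}\cup B$, but it misses $A\setminus\{v\}$. We aim to cover $A$ by a double star and $B$ together with whatever is left by a star, choosing the colors according to the structure at some vertex of $B$. If $A=\{v\}$, the red star at $v$ already covers everything, so assume $|A|\ge2$.

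First suppose some vertex $a\in A\setminus\{v\}$ has a red neighbor $b\in B$. Then the red double star with centers $v$ and $b$ (all edges $vb'$ for $b'\in B$, together with the edges $ba''$ for all red neighbors $a''$ of $b$) covers $B\cup\{v\}\cup N_1(b)$. The remaining vertices $A\setminus(N_1(b)\cup\{v\})$ are all blue neighbors of $b$, so the blue star at $b$ covers them. The first subgraph is a tree of diameter at most $3$ and the second has diameter at most $2$.

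This suggests the right choice of center: we should take \emph{any} $b\in B$, not just one with a red neighbor besides $v$. The red double star with centers $v,b$ covers $B\cup\{v\}\cup (N_1(b)\cap A)$, and the blue star at $b$ covers $A\cap N_2(b)$. Since every vertex of $A$ is joined to $b$ in red or in blue, these two sets together cover $A$. So the proof is simply: fix any $b\in B$ (assuming $B$ is nonempty; otherwise the statement is trivial), and take $H_1$ to be the red double star on $v$ and $b$ and $H_2$ to be the blue star centered at $b$.

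The only point that needs care is the degenerate cases. If $b$ has no red neighbor other than $v$, then $H_1$ is just the red star at $v$. If $b$ has no blue neighbor, then $H_2$ is the single vertex $b$. In both cases the pair is still a valid cover: single vertices and stars count as double stars, and every piece has diameter at most $3$. I do not expect any real obstacle beyond making sure these conventions are consistent with the definition of a good cover.
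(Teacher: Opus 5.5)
Your proof is correct and is essentially the paper's argument: fix any vertex $b$ opposite the special vertex $v$, take the red double star with base edge $(v,b)$ and the blue star at $b$, and note that $A=N_1(b)\cup N_2(b)$. The preliminary case in your second paragraph is a special case of the general argument and can be dropped.
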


\begin{proof}
Let $v$ be a special vertex and assume without loss of generality that all edges incident to $v$ are red. Choose any vertex $w$ in the opposite partite class. The red double star with base edge $(v,w)$ covers every vertex in the partite class opposite $v$ and every red neighbor of $w$. The blue star formed by the blue edges incident to $w$ covers every remaining vertex. Together, these two subgraphs form a good cover.
\end{proof}

Distinct vertices $v,w$ of $K[A,B]$ in the same
partite class are {\em equivalent} if $N_1(v)=N_1(w)$; consequently, $N_2(v)=N_2(w)$ as well. A coloring with no equivalent vertices is called {\em reduced}.

\begin{lemma}\label{equivreduction}
If $v,w$ are distinct equivalent vertices of $G$ and $G-v$ has a good cover, then $G$ has a good cover.
\end{lemma}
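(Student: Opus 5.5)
Given a good cover $(H_1,H_2)$ of $G-v$, we extend it to $G$ by letting $v$ copy $w$. Since $v,w$ are equivalent, $N_1(v)=N_1(w)$ and $N_2(v)=N_2(w)$, and $w$ lies in $H_1$ or $H_2$; assume $w\in V(H_1)$ and that $H_1$ has color $i$. Let $H_1'$ be the subgraph of $G_i$ obtained from $H_1$ by adding $v$ together with all edges $vx$ with $x\in V(H_1)$ and $wx\in E(H_1)$. These edges exist in $G_i$ because $N_i(v)=N_i(w)$. Then $(H_1',H_2)$ is monochromatic in each part and covers $V(G)$, so it remains to show that $H_1'$ has diameter at most $3$.

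To check the diameter we use Observation~\ref{equicond}. Note that $H_1'$ has an edge unless $H_1$ is the single vertex $w$; that degenerate case is handled separately below.

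\emph{Pairs in the class of $v$.} Distances between two vertices of $H_1$ are unchanged, since we only added a vertex. For the pair $v,w$, the vertex $w$ has some neighbor $x$ in $H_1$ (when $H_1$ has an edge), and then $v,x,w$ is a path of length two. For a pair $v,u$ with $u\neq w$ in the same class, $u$ and $w$ have a common $H_1$-neighbor $x$, and $x$ is also adjacent to $v$ by construction. So $v$ and $u$ are at distance two.

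\emph{Pairs in the opposite class.} Any two such vertices are already at distance two in $H_1$, so nothing changes. This establishes the condition of Observation~\ref{equicond}, hence the diameter bound.

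\emph{Degenerate case.} If $H_1$ is just $\{w\}$, then $\{v,w\}$ is not covered by a single isolated vertex, so the copying argument above does not apply directly. Instead we enlarge it to a star. Pick any vertex $x$ in the opposite class; the edge $wx$ has some color $j$, and $vx$ has the same color $j$ by equivalence. Replace $H_1$ by the path $v,x,w$ in color $j$. This path is monochromatic with diameter $2$ and covers $v$ and $w$, while $H_2$ is untouched. The same fix applies if the subgraphs are allowed to be single vertices of either color.

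The only delicate step is this bookkeeping for trivial subgraphs; the main argument is immediate from equivalence.
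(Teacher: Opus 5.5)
Your proof is correct and takes essentially the same route as the paper: let $v$ inherit $w$'s edges in the covering subgraph containing $w$ (checked via Observation~\ref{equicond}), and handle the trivial case $H_1=\{w\}$ with a monochromatic two-edge star $v,x,w$. The only difference is cosmetic. You copy all $H_1$-edges at $w$ rather than only the midpoints of two-edge paths to other vertices, which absorbs the paper's separate ``nontrivial star'' case into the general argument.
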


\begin{proof}
Suppose that $v,w\in A$, and let $(H_1,H_2)$ be a good cover of $G-v$. Assume without loss of generality that $w\in V(H_1)$ and that $H_1$ is red. If $w$ is the only vertex of $H_1$ in $A$ and $H_1$ contains a vertex of $B$, then $H_1$ is a nontrivial star and, because $v,w$ are equivalent, adding $v$ extends it to a red complete bipartite graph. If $H_1$ consists only of $w$, choose any $b\in B$ and replace $H_1$ by the monochromatic two-edge star on $\{v,w,b\}$. Otherwise, every vertex of $V(H_1)\cap A$ is joined to $w$ by a two-edge red path. Since $v,w$ are equivalent, the midpoints of these paths are also red-adjacent to $v$, so $(H_1\cup\{v\},H_2)$ is a good cover of $G$. The other cases are symmetric.
\end{proof}

\begin{lemma}\label{nested}
If there exist distinct vertices $x,y$ in the same partite set and $i\in [2]$ such that
\[
N_i(y)\subsetneq N_i(x),
\]
then $G$ has a good cover consisting of double stars of opposite colors.
\end{lemma}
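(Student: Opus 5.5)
The plan is to build the two double stars explicitly around a single vertex that witnesses the strict inclusion. By symmetry of the colors and of the partite classes, assume $i=1$ (red) and $x,y\in A$. Since $N_1(y)\subsetneq N_1(x)$, we can pick a vertex $b\in N_1(x)\setminus N_1(y)$. Then $xb$ is red and $yb$ is blue.

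Complementing inside $B$ reverses the inclusion: $N_2(x)=B\setminus N_1(x)$ is contained in $B\setminus N_1(y)=N_2(y)$. We take two double stars:
\begin{itemize}
\item $H_1$ is the red double star with base edge $(x,b)$. It consists of all red edges at $x$ and all red edges at $b$, so it covers $N_1(x)\cup N_1(b)$.
\item $H_2$ is the blue double star with base edge $(y,b)$. It consists of all blue edges at $y$ and all blue edges at $b$, so it covers $N_2(y)\cup N_2(b)$.
\end{itemize}
Each is a monochromatic tree of diameter at most $3$, and they have opposite colors.

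It remains to check that together they cover $V(G)$:
\begin{itemize}
\item On the $B$ side, $N_1(x)\cup N_2(y)\supseteq N_1(x)\cup N_2(x)=B$.
\item On the $A$ side, every $a\in A$ is joined to $b$ in some color, so $A=N_1(b)\cup N_2(b)$ is covered.
\end{itemize}
The centers are included automatically: $x\in N_1(b)$, $y\in N_2(b)$, and $b$ lies in both double stars.

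There is no real obstacle. The one point needing care is to choose the blue base edge at $y$ through the same vertex $b$, so that the two stars centered at $b$ together cover all of $A$. The case $x,y\in B$ is identical with the roles of $A$ and $B$ exchanged.
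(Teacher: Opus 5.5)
Your proof is correct and matches the paper's argument: you choose $b\in N_i(x)\setminus N_i(y)$, take the color-$i$ double star on base edge $(x,b)$ and the color-$(3-i)$ double star on base edge $(y,b)$, and cover $A$ by $N_i(b)\cup N_{3-i}(b)$. On the $B$ side, $N_i(x)\cup N_{3-i}(y)=B$ follows from the complemented inclusion $N_{3-i}(x)\subseteq N_{3-i}(y)$.
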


\begin{proof}
Without loss of generality, $x,y\in A$. Choose $b\in N_i(x)\setminus N_i(y)$. The double star in color $i$ with base edge $(x,b)$ covers
\[
N_i(b)\cup N_i(x),
\]
while the double star in color $3-i$ with base edge $(y,b)$ covers
\[
N_{3-i}(b)\cup N_{3-i}(y).
\]
The two sets cover $A$ because $N_i(b)\cup N_{3-i}(b)=A$, and they cover $B$ because $N_i(y)\subseteq N_i(x)$ implies $N_i(x)\cup N_{3-i}(y)=B$. The case $x,y\in B$ is symmetric.
\end{proof}

\begin{proposition}\label{reachabilitycover}
For all $i\in [2]$, if neither partite class is $2$-reachable in color $i$, then $G$ has a good cover by two double stars of color $3-i$.
\end{proposition}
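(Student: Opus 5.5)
We take $i=1$ (red); the case $i=2$ is the same with the colors swapped. Since neither partite class is $2$-reachable in red, we can fix a pair $a,a'\in A$ with $N_1(a)\cap N_1(a')=\emptyset$ and a pair $b,b'\in B$ with $N_1(b)\cap N_1(b')=\emptyset$. The plan is to build two blue double stars whose base edges are a blue matching between $\{a,a'\}$ and $\{b,b'\}$.

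First, the independence conditions turn into covering conditions in blue. Every vertex of $B$ is red-adjacent to at most one of $a,a'$, so it is blue-adjacent to at least one of them. Hence
\[
N_2(a)\cup N_2(a')=B .
\]
In the same way, $N_2(b)\cup N_2(b')=A$.

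Next, we look at the four edges $ab,ab',a'b,a'b'$. Vertex $b$ cannot be red to both $a$ and $a'$, since it would then be a common red neighbor. The same holds for $b'$. Symmetrically, $a$ cannot be red to both $b$ and $b'$, and neither can $a'$. So the red edges among these four form a matching in the $4$-cycle $a,b,a',b'$. This means they lie inside one of the two perfect matchings $\{ab,a'b'\}$ or $\{ab',a'b\}$. Consequently, the other perfect matching is entirely blue. By relabeling $b$ and $b'$ if necessary, we may assume $ab$ and $a'b'$ are both blue.

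Finally, we take the blue double star with base edge $(a,b)$, which covers $N_2(a)\cup N_2(b)$. We also take the blue double star with base edge $(a',b')$, which covers $N_2(a')\cup N_2(b')$. Their union contains $N_2(a)\cup N_2(a')=B$ and $N_2(b)\cup N_2(b')=A$, so together they cover $V(G)$. Each double star is a blue tree of diameter at most $3$, so this is a good cover by two double stars of color $3-i$.

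The only step needing care is the second one: showing that some perfect matching between the two independent pairs is fully blue. It follows directly from the fact that each independent pair has no common red neighbor. Everything else is a direct check.
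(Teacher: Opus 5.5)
Your proof is correct and follows essentially the same argument as the paper: take an independent pair in each class, note that the opposite-color neighborhoods of each pair cover the other class, and find an opposite-color perfect matching between the two pairs to serve as the base edges of two double stars. Your $4$-cycle matching argument spells out that both independence conditions are needed to get these base edges. The paper's wording explicitly names only one of them and uses the disjointness $N_2(a_1)\cap N_2(a_2)=\emptyset$ implicitly.
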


\begin{proof}
Without loss of generality, suppose neither partite class is $2$-reachable in blue.
Since $A$ is not $2$-reachable in blue, there are vertices $a_1,a_2\in A$ with no blue path of length two between them. Hence
\[
N_2(a_1)\cap N_2(a_2)=\emptyset,
\]
and therefore
\begin{equation}\label{fed}
N_1(a_1)\cup N_1(a_2)=B.
\end{equation}
Similarly, since $B$ is not $2$-reachable in blue, there are vertices $b_1,b_2\in B$ with no blue path of length two between them. The vertices $b_1,b_2$ cannot both lie in $N_2(a_1)$ or both lie in $N_2(a_2)$, since they would then be joined by a blue path of length two through $a_1$ or $a_2$. Therefore, after relabeling if necessary, $b_1\in N_1(a_1)$ and $b_2\in N_1(a_2)$. As in \eqref{fed},
\[
N_1(b_1)\cup N_1(b_2)=A.
\]
The two red double stars with base edges $(a_1,b_1)$ and $(a_2,b_2)$ provide the desired cover.
\end{proof}

We prove Theorem \ref{thm:main} by classifying and eventually excluding potential counterexamples.  A counterexample $G=K[A,B]$ to the statement $f(2)\leq 3$ is called {\em minimal} if $|A|+|B|$ is as small as possible.

\begin{theorem}\label{mincount}
Every minimal counterexample $G=K[A,B]$ to the statement $f(2)\leq 3$ has the following properties.
\begin{enumerate}
\item[(i)] $G$ has no special vertex.
\item[(ii)] $G$ has no equivalent vertices.
\item[(iii)] For all distinct $x,y\in V(G)$ and all $i\in[2]$, $N_i(x)\not\subseteq N_i(y)$.
\item[(iv)] Either every pair of vertices in one partite class is doubly covered (a Type I coloring), or every pair of vertices in one partite class is red-covered and every pair of vertices in the other is blue-covered (a Type II coloring).
\end{enumerate}
\end{theorem}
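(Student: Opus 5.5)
Each of (i)--(iii) follows from one of the preceding lemmas applied to a counterexample, and (iv) is a case analysis on $2$-reachability using Proposition~\ref{reachabilitycover}.

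\textbf{Items (i)--(iii).}
\begin{enumerate}
\item[(i)] This is immediate from Lemma~\ref{specialcover}: a special vertex yields a good cover, contradicting that $G$ is a counterexample.
\item[(ii)] Suppose $v,w$ are equivalent. By minimality, $G-v$ is not a counterexample, so it has a good cover. This needs $G-v$ to still be a complete bipartite graph with both classes nonempty, which holds since $w$ remains in $v$'s class. Lemma~\ref{equivreduction} then gives a good cover of $G$, a contradiction.
\item[(iii)] Let $x,y$ be distinct with $N_i(x)\subseteq N_i(y)$. If $x,y$ lie in different partite classes, their neighborhoods lie in different classes, so the inclusion forces $N_i(x)=\emptyset$. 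Then $x$ is special, contradicting (i). So $x,y$ are in the same class. Proper inclusion is excluded by Lemma~\ref{nested}. Equality $N_i(x)=N_i(y)$ means $x,y$ are equivalent (for $i=2$ take complements), contradicting (ii).
\end{enumerate}

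\textbf{Item (iv).} By Proposition~\ref{reachabilitycover} applied with $i=1$ and with $i=2$, some partite class is $2$-reachable in red and some partite class is $2$-reachable in blue; otherwise $G$ has a good cover by two double stars. If the same class is $2$-reachable in both colors, then every pair in that class is doubly covered, which is Type I. Otherwise one class is red-$2$-reachable and the other is blue-$2$-reachable, which is Type II.

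The only point needing care is that the minimality step in (ii) is legitimate, i.e.\ that $G-v$ is a valid smaller instance. The rest is direct bookkeeping of the earlier lemmas.
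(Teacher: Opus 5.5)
Your proof is correct and follows the paper's argument essentially step for step. Item (i) comes from Lemma~\ref{specialcover}, (ii) from minimality plus Lemma~\ref{equivreduction}, (iii) from Lemma~\ref{nested} together with (ii), and (iv) from Proposition~\ref{reachabilitycover} applied in both colors. Your extra checks fill in details the paper leaves implicit: that $G-v$ keeps both classes nonempty, and that a cross-class inclusion $N_i(x)\subseteq N_i(y)$ forces $N_i(x)=\emptyset$ and hence a special vertex.
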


\begin{proof}
(i) This follows immediately from Lemma \ref{specialcover}.

(ii) If $G$ had equivalent vertices $v,w$, then the minimality of $G$ would give a good cover of $G-v$, and Lemma \ref{equivreduction} would extend it to $G$, a contradiction.

(iii) Proper inclusion is ruled out by Lemma \ref{nested}, while equality is ruled out by (ii).

(iv) Proposition \ref{reachabilitycover}, applied in each color, shows that at least one partite class is $2$-reachable in red and at least one partite class is $2$-reachable in blue. If the same partite class can be chosen for both colors, the coloring is Type I. Otherwise the two choices lie in opposite partite classes, and the coloring is Type II.
\end{proof}

\section{Proof of Theorem \ref{thm:main}}\label{mainproof}

We now study Type I and Type II colorings separately and find a good cover in each case, thereby completing the proof of Theorem \ref{thm:main}. For Type II colorings, we prove the stronger conclusion that one of the covering subgraphs can be chosen to be a double star.  We begin with a lemma which will be used in both cases.

\begin{lemma}[Base edge covering]\label{rootedcover}
Let $(a,b)$ be a red edge of $G=K[A,B]$, where $a\in A$ and $b\in B$, and set
\[
A_1=N_1(b),~A_2=N_2(b),~B_1=N_1(a),~\text{and}~B_2=N_2(a).
\]
Assume that $A_2,B_2\ne\emptyset$ and that
\[
N_2(x)\cap B_2\ne\emptyset\text{ for every }x\in A_2,
\]
and
\[
N_2(y)\cap A_2\ne\emptyset\text{ for every }y\in B_2.
\]
Under these assumptions, $G$ has a good cover consisting of the red double star with
base edge $(a,b)$ and the blue subgraph
\(
G_2[A_2\cup\{a\},B_2\cup\{b\}].
\)
\end{lemma}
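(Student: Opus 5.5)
The proof is a direct verification, and I would organize it in two steps. First I check that the two subgraphs together cover $V(G)$. Then I check that each has diameter at most $3$, handling the blue subgraph through Observation~\ref{equicond}.

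\emph{Covering.} The red double star with base edge $(a,b)$ contains $a$, $b$, every red neighbor of $b$, and every red neighbor of $a$. So it covers $A_1\cup B_1$; note $a\in A_1$ and $b\in B_1$ because $(a,b)$ is red. Since $A=A_1\cup A_2$ and $B=B_1\cup B_2$, the blue subgraph $H=G_2[A_2\cup\{a\},B_2\cup\{b\}]$ covers everything else. The red double star is a tree of diameter at most $3$, so nothing more is needed for it.

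\emph{Diameter of $H$.} I would first record the blue edges of $H$ that come for free. The vertex $a$ is blue-adjacent to every vertex of $B_2$, by definition of $B_2$. The vertex $b$ is blue-adjacent to every vertex of $A_2$. Since $B_2\neq\emptyset$, $H$ has at least one edge, so Observation~\ref{equicond} applies. It therefore suffices to show that any two distinct vertices of $H$ in the same class are at distance exactly $2$ in $H$.

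On the $A$-side, two vertices $x,x'\in A_2$ have the common blue neighbor $b$. For $x\in A_2$ and $a$, the hypothesis gives $y\in N_2(x)\cap B_2$, and then $x,y,a$ is a blue path inside $H$. The $B$-side is symmetric. Two vertices $y,y'\in B_2$ share the blue neighbor $a$. For $y\in B_2$ and $b$, the hypothesis gives $x\in N_2(y)\cap A_2$, and $y,x,b$ is a blue path in $H$.

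The only point needing care is the pair $a,b$ itself, which is not blue-adjacent. This causes no problem, because $a$ and $b$ lie in opposite classes and Observation~\ref{equicond} only requires same-class pairs at distance $2$. I do not expect any real obstacle here; the hypotheses are exactly what is needed for the two "mixed" pairs $(x,a)$ and $(y,b)$.
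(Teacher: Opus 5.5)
Your proof is correct and follows the paper's argument: the red double star covers $A_1\cup B_1$, the blue subgraph covers the rest, and the diameter bound for $G_2[A_2\cup\{a\},B_2\cup\{b\}]$ follows from checking same-class pairs (via the common neighbors $b$ and $a$ and the two hypotheses) and then applying Observation~\ref{equicond}. Your explicit checks that the blue subgraph has an edge and that the opposite-class pair $a,b$ needs no verification are details the paper leaves implicit.
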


\begin{proof}
The red double star covers $\{a,b\}\cup A_1\cup B_1$. In the blue subgraph, any
two vertices of $A_2$ have a common neighbor $b$, while any two
vertices of $B_2$ have a common neighbor $a$. The two assumptions
ensure that $a$ is blue-covered with every vertex of $A_2$ and that
$b$ is blue-covered with every vertex of $B_2$. Hence every two
vertices in the same partite class have distance two, so the blue
subgraph has diameter at most three.
\end{proof}

\begin{figure}[ht]
	\centering
        \begin{tikzpicture}[scale=2.5]
				\coordinate (a) at (-0.5,0);
				\coordinate (b) at (-0.5,1.5);		
				\coordinate (c) at (-0.2,0);
				\coordinate (d) at (0.5,0);
                \coordinate (c2) at (-0.2,1.5);
				\coordinate (d2) at (0.5,1.5);
               	\coordinate (c1) at (1.0,0);
				\coordinate (d1) at (1.7,0);
    	        \coordinate (c3) at (1.0,1.5);
				\coordinate (d3) at (1.7,1.5);
				\coordinate (e) at (0.5,1.5);
			    \coordinate (f) at (0.5,0);
         		\coordinate (g) at (0.15,0);
			    \coordinate (h) at (1.3,0);
	            \coordinate (i) at (0.15,1.5);
			    \coordinate (j) at (1.3,1.5);

    \def\bigrad{1.5}
    \def\smrad{0.3}
           \draw[ellip] (e) ellipse ({\bigrad} and {\smrad});
         \draw[ellip] (f) ellipse ({\bigrad} and {\smrad});
        \draw[ellip] (g) ellipse ({0.5} and {0.1});
         \draw[ellip] (h) ellipse ({0.5} and {0.1});
         \draw[ellip] (i) ellipse ({0.5} and {0.1});
         \draw[ellip] (j) ellipse ({0.5} and {0.1});
						\begin{pgfonlayer}{forefore}
					\draw (a) node[vtx,label={left:$b$}] {};
					\draw (b) node[vtx,label={left:$a$}] {};
                    \node [left] at (-1.2,1.5) {$A$};
                    \node [left] at (-1.2,0) {$B$};
                    \node [above] at (0.15,1.55) {$A_1$};
                    \node [above] at (1.3,1.55) {$A_2$};
                   \node [below] at (0.15,-0.05) {$B_1$};
                    \node [below] at (1.3,-0.05) {$B_2$};

  					\end{pgfonlayer}
				\begin{pgfonlayer}{fore}
					\draw[edg,red] (b) -- (c);
					\draw[edg,red] (b) -- (d);
				      \draw[edg,red] (a) -- (c2);
					\draw[edg,red] (a) -- (d2);
	                \draw[edg,blue] (b) -- (c1);
					\draw[edg,blue] (b) -- (d1);
                    \draw[edg,blue] (a) -- (c3);
					\draw[edg,blue] (a) -- (d3);
					\draw[edg,red] (a) -- (b);
							\end{pgfonlayer}
			\end{tikzpicture}
\caption{The structure of the base edge covering.}
            \label{cover1}
	\end{figure}

\subsection{Type I colorings}\label{sec:typeI}

Throughout this subsection, whenever $G=K[A,B]$ has a Type I coloring, we assume without loss of generality that every pair in $A$ is doubly covered. For such a coloring, let $G_B$ be the graph with vertex set $B$ in which two vertices are adjacent exactly when they form a doubly independent pair.

\begin{lemma}\label{matching}
If $G=K[A,B]$ is a reduced Type I coloring, then $\Delta(G_B)\le1$; equivalently, $E(G_B)$ is a matching.
\end{lemma}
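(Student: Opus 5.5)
The statement follows directly from unwinding what a doubly independent pair in $B$ means; I do not expect to need the Type I hypothesis. It should use only the fact that the coloring is reduced, i.e.\ property (ii) of Theorem~\ref{mincount}.

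\emph{Step 1: a doubly independent partner is determined by its partner.} Suppose $b,b'\in B$ form a doubly independent pair, so
\[
N_1(b)\cap N_1(b')=\emptyset \quad\text{and}\quad N_2(b)\cap N_2(b')=\emptyset .
\]
Since $G$ is complete bipartite, each of $b$ and $b'$ has its neighborhood $A$ partitioned as $A=N_1(\cdot)\,\dot\cup\, N_2(\cdot)$. From $N_1(b')\cap N_1(b)=\emptyset$ we get $N_1(b')\subseteq N_2(b)$. From $N_2(b')\cap N_2(b)=\emptyset$ we get $N_2(b')\subseteq N_1(b)$, and taking complements in $A$ gives $N_1(b')\supseteq N_2(b)$. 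Hence
\[
N_1(b')=N_2(b)=A\setminus N_1(b).
\]
So the red neighborhood of any doubly independent partner of $b$ is forced to be the complement of $N_1(b)$.

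\emph{Step 2: two partners would be equivalent.} Suppose, towards a contradiction, that some $b\in B$ has degree at least $2$ in $G_B$, with distinct neighbors $b'$ and $b''$. Applying Step~1 to both pairs gives $N_1(b')=A\setminus N_1(b)=N_1(b'')$. Thus $b'$ and $b''$ are distinct equivalent vertices, which contradicts the assumption that the coloring is reduced. Therefore $\Delta(G_B)\le 1$.

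\emph{Step 3: the matching reformulation.} A graph has maximum degree at most one exactly when its edge set is a matching, which gives the equivalent formulation.

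There is essentially no obstacle here. The only point needing care is the complement step in Step~1, which relies on the host graph being complete bipartite, so that every vertex of $A$ is joined to $b$ in exactly one color.
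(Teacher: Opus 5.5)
Your proof is correct and follows the paper's argument. The paper takes a path $b_1b_2b_3$ in $G_B$ and notes that double independence forces $N_1(b_1)=N_2(b_2)=N_1(b_3)$, phrased via a partition $A=A_1\cup A_2$; it then concludes that $b_1,b_3$ are equivalent, contradicting reducedness, and, as you observed, it does not use the Type I hypothesis either.
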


\begin{proof}
It suffices to show that $G_B$ contains no path on three vertices $b_1,b_2,b_3$. Since $(b_1,b_2)$ is doubly independent, $A$ is partitioned into sets $A_1,A_2$ such that $[b_1,A_1]$ and $[b_2,A_2]$ are blue stars, while $[b_1,A_2]$ and $[b_2,A_1]$ are red stars. Applying the same argument to the doubly independent pair $(b_2,b_3)$ shows that $b_1$ and $b_3$ are equivalent, contradicting the assumption that $G$ is reduced.
\end{proof}

We call a reduced Type I coloring $G=K[A,B]$ {\em strong} if $E(G_B)$ is a perfect matching of $B$, with edges
\begin{equation}\label{eq1}
e_1=(b_1,b_2),\dots,e_m=(b_{2m-1},b_{2m})
\end{equation}
and, for every $1\le i<j\le m$, all four pairs with one endpoint in $e_i$ and the other in $e_j$ are doubly covered from $A$. Let $G(m)$ be the family of strong colorings with $|B|=2m$.
Example \ref{ex:ramsey} is an interesting member of $G(3)$. We next show that the proof for Type I colorings can be reduced to the strong case.

\begin{lemma}\label{strong}
Let $G=K[A,B]$ be a reduced Type I coloring. If $G$ is not strong, then there is a good cover consisting of a double star in one color and a subgraph having diameter at most 3 in the other.
\end{lemma}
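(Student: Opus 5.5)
The statement fails to be strong in one of two ways. Either (a) some pair $e_i,e_j$ of edges of $G_B$ has a cross pair that is not doubly covered from $A$, or (b) $E(G_B)$ is not a perfect matching, so some $b\in B$ forms a doubly independent pair with no other vertex of $B$. Note that a vertex $b$ with all four cross pairs doubly covered is automatically fine, so these two cases are exhaustive. I treat them separately, and in each case aim either for a cover by double stars via Lemma~\ref{nested} or for the base edge covering of Lemma~\ref{rootedcover}.

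\emph{Case (a).} Suppose $(b_1,b_2)$ and $(b_3,b_4)$ are edges of $G_B$ and, say, $(b_1,b_3)$ is not doubly covered. Since $(b_1,b_3)$ is not an edge of $G_B$, it is independent in exactly one color; by symmetry of colors, say red, so $N_1(b_1)\cap N_1(b_3)=\emptyset$. Double independence of $(b_1,b_2)$ gives $N_1(b_2)=A\setminus N_1(b_1)=N_2(b_1)$, exactly as in the proof of Lemma~\ref{matching}. Hence
\[
N_1(b_3)\subseteq A\setminus N_1(b_1)=N_1(b_2).
\]
If the inclusion is strict, Lemma~\ref{nested} gives a cover by two double stars, which is stronger than required. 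If it is an equality, then $b_2$ and $b_3$ are equivalent, contradicting that $G$ is reduced. The other three cross pairs are handled identically after relabeling, so case (a) is immediate.

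\emph{Case (b).} Fix $b\in B$ such that every pair $(b,b')$ with $b'\in B\setminus\{b\}$ is covered in at least one color. By (i) of Theorem~\ref{mincount} (or directly, since a special vertex yields a cover by Lemma~\ref{specialcover}), we may assume that $b$ has neighbors of both colors. I will choose $a\in A$ with $(a,b)$ red, say, and try to apply Lemma~\ref{rootedcover}, which produces exactly a double star plus a diameter-$3$ subgraph. This requires two conditions.
\begin{enumerate}
\item[(1)] Every $x\in A_2=N_2(b)$ has a blue neighbor in $B_2=N_2(a)$. Since $(a,x)$ is doubly covered (Type I), $a$ and $x$ have a common blue neighbor, which lies in $N_2(a)=B_2$. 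So this condition holds for free.
\item[(2)] Every $y\in B_2$ has a blue neighbor in $A_2$. This fails exactly when $N_2(y)\subseteq N_1(b)$, that is, when $(y,b)$ is blue-independent.
\end{enumerate}
By the choice of $b$, such a $y$ must then be red-covered with $b$. Condition (2) also needs $A_2,B_2\neq\emptyset$. We have $A_2\neq\emptyset$ because $b$ is not special, and if $B_2=\emptyset$ then $a$ is special, so that case is handled by Lemma~\ref{specialcover}.

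The main obstacle is therefore the choice of $a$. If for every choice of red neighbor $a$ of $b$ (and, symmetrically, every blue neighbor with colors swapped) some $y\in N_2(a)$ is blue-independent from $b$, I want to derive a nested pair of neighborhoods and finish by Lemma~\ref{nested} or by reducedness. The relevant facts about such a $y$ are that $N_2(y)\subseteq N_1(b)$, $N_1(y)\supseteq N_2(b)$, and $y\notin N_1(a)$. The first thing I would try is to take $a\in N_1(b)$ maximizing $|N_1(a)|$ and compare $N_1(y)$ with $N_1(b')$ for suitable $b'$, aiming for an inclusion such as $N_1(b)\subseteq N_1(y')$ between two vertices of $B$.

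If that comparison fails, the fallback is to replace the blue part of the cover. I would discard the bad vertices $y$ from $B_2$ and instead cover them by a red double star with base edge $(b,a')$, where $a'\in N_1(b)\cap N_1(y)$; such an $a'$ exists because $(y,b)$ is red-covered. The blue subgraph $G_2[A_2\cup\{a\},B_2\cup\{b\}]$ restricted to the remaining vertices then still has diameter at most $3$ by the argument of Lemma~\ref{rootedcover}. The remaining difficulty in this fallback is to ensure that a single base edge $(b,a')$ handles all bad $y$ simultaneously.
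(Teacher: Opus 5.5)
Your case (a) is correct and matches the paper's treatment of the case where $E(G_B)$ is a perfect matching but some cross pair is not doubly covered. The red-independence of $(b_1,b_3)$ together with $N_1(b_2)=N_2(b_1)$ gives $N_1(b_3)\subseteq N_1(b_2)$, reducedness makes this strict, and Lemma~\ref{nested} finishes. Your setup of case (b) is also right: condition (1) of Lemma~\ref{rootedcover} is automatic from Type I, and condition (2) fails only if some $y\in N_2(a)$ forms a blue-independent pair with $b$.

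However, you never close case (b), so there is a genuine gap. The ``main obstacle'' is left open, and the fallback does not work as stated. Adding a second red double star uses three subgraphs. Replacing the base edge $(a,b)$ by $(a',b)$ can leave $N_1(a)\setminus N_1(a')$ uncovered. Re-centering the blue part at $a'$ just reproduces the same obstruction inside $N_2(a')$.

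The missing idea is to choose the \emph{color} of the base edge, not the vertex $a$. If $b$ lies in no blue-independent pair, then condition (2) holds for \emph{every} red neighbor $a$ of $b$, and Lemma~\ref{rootedcover} applies directly. If $b$ lies in no red-independent pair, do the same with the colors swapped, taking $a$ to be a blue neighbor of $b$. Otherwise, $(b,b')$ is blue-independent and $(b,b'')$ is red-independent for some $b',b''\in B$. Because $b$ is in no doubly independent pair, $b'\neq b''$, and
\[
N_1(b'')\subseteq N_2(b)\subseteq N_1(b').
\]
The first inclusion is the red-independence of $(b,b'')$ and the second is the blue-independence of $(b,b')$. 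This is just your own observation $N_1(y)\supseteq N_2(b)$ combined with its color-swapped counterpart. Since $G$ is reduced, the inclusion $N_1(b'')\subseteq N_1(b')$ is strict, and Lemma~\ref{nested} gives a cover by two double stars of opposite colors. No maximization over $a$ is needed; this is exactly how the paper handles the non-perfect-matching case.
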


\begin{proof}
By Lemma \ref{specialcover}, we may assume that $G$ has no special vertex.

By Lemma \ref{matching}, $E(G_B)$ is a matching. Assume first that it is not a perfect matching of $B$, and choose $b\in B$ not incident to any doubly independent pair.
If there are vertices $b',b^{''}\in B$ such that $(b,b')$ is blue-independent and $(b,b^{''})$ is red-independent, then since $b$ is not incident to a doubly independent pair we have $b'\ne b^{''}$ and
\[
N_1(b^{''})\subseteq N_2(b)\subseteq N_1(b').
\]
Since $G$ is reduced, $N_1(b^{''})\subsetneq N_1(b')$ and thus Lemma \ref{nested} gives the desired cover.  We may therefore assume that no such vertices $b',b^{''}$ exist. Without loss of generality, suppose $b$ is not incident to any blue-independent pair. Since $b$ is not special, it has a red neighbor $a$. Define
\[
A_1=N_1(b),~A_2=N_2(b),~B_1=N_1(a),~\text{and}~B_2=N_2(a).
\]
Since neither $a$ nor $b$ is special, the sets $A_2$ and $B_2$ are nonempty. For every $a'\in A_2$, the pair $(a,a')$ is blue-covered because all pairs in $A$ are doubly covered. For every $b'\in B_2$, the pair $(b,b')$ is blue-covered by the choice of $b$. The middle vertices of these paths necessarily lie in $B_2$ and $A_2$, respectively. Hence the hypotheses of Lemma \ref{rootedcover} are satisfied and thus we obtain the required cover (see Figure \ref{cover1}).

It remains to consider the case in which $E(G_B)$ is a perfect matching of $B$. Since the coloring is not strong, at least one of the four pairs between some $e_i$ and $e_j$, say $(b_{2i},b_{2j})$, is not doubly covered from $A$; say it is red-independent.  Since $(b_{2i-1},b_{2i})$ is blue-independent, we have
\[
N_1(b_{2j})\subseteq N_2(b_{2i})\subseteq N_1(b_{2i-1}).
\]
Since $G$ is reduced, $N_1(b_{2j})\subsetneq N_1(b_{2i-1})$ and thus Lemma \ref{nested} gives the desired cover.
\end{proof}

Finally we handle strong colorings.

\begin{theorem}\label{type1q1}
If $G=K[A,B]\in G(m)$, then $G$ has a good cover.
\end{theorem}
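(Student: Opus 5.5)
Throughout, the matching edges $e_i=(b_{2i-1},b_{2i})$ of $G_B$ give the following structure. Since $(b_{2i-1},b_{2i})$ is doubly independent, every $x\in A$ is red to exactly one endpoint of $e_i$ and blue to the other. In particular, $N_1(b_{2i-1})=N_2(b_{2i})$. No monochromatic subgraph of diameter at most $3$ can contain both endpoints of any $e_i$, because by Observation~\ref{equicond} they would need a common neighbor in that color. Hence in any good cover, each covering subgraph meets $B$ in a partial transversal of the $e_i$, and the two subgraphs together split $B$ into a transversal $T$ and its complement $B\setminus T$.

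The plan is to imitate the star-plus-cycle cover of Example~\ref{ex:ramsey}.
\begin{enumerate}
\item Fix a vertex $a\in A$ and relabel within each $e_i$ so that $T:=N_1(a)=\{b_1,b_3,\dots,b_{2m-1}\}$. Then $N_2(a)=B\setminus T$.
\item Let $H_2$ be the blue star centered at $a$. It covers $\{a\}\cup (B\setminus T)$.
\item Let $H_1=G_1[A,T]$. It must cover all of $A$ and all of $T$.
\item By Observation~\ref{equicond}, $H_1$ has diameter at most $3$ if and only if two conditions hold. First, every two vertices of $T$ have a common red neighbor in $A$. This is automatic: they lie in different edges $e_i,e_j$, and strongness says such pairs are doubly covered from $A$. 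Second, every two vertices $x,y\in A$ have a common red neighbor inside $T$.
\end{enumerate}
The whole difficulty is therefore the second condition, which I will call ($\ast$): $A$ is $2$-reachable in red through $N_1(a)$. By symmetry of colors, it would equally suffice to find $a$ such that $A$ is $2$-reachable in blue through $N_2(a)$. In that case we use the red star at $a$ together with $G_2[A,N_2(a)]$.

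The main step, and the expected obstacle, is choosing $a$ so that ($\ast$) holds in one of the two colors. Suppose it fails for $a$ in both colors. Then there are $x,y\in A$ with $N_1(x)\cap N_1(y)\cap N_1(a)=\emptyset$, and $x',y'\in A$ with $N_2(x')\cap N_2(y')\cap N_2(a)=\emptyset$. Translating through the complementary pairs, $N_1(x)\cap N_1(y)\cap T=\emptyset$ means that on each $e_i$ at least one of $x,y$ sees $b_{2i}$ in red. I would first try an extremal choice, for instance taking $a$ and $x,y$ to make the relevant red/blue overlaps with $T$ extreme. The goal is to derive a nested-neighborhood relation $N_i(u)\subsetneq N_i(v)$ between two vertices of $A$, which Lemma~\ref{nested} forbids (recall $G$ is reduced). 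Alternatively, I would aim to contradict the double coverage of all pairs in $A$ (Type~I), since $x,y$ still need a common red neighbor, which must then lie in $B\setminus T$.

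If no single $a$ works, the fallback is a cover by two subgraphs with genuinely distributed $A$-parts. Take $H_1=G_1[A',T]$ and $H_2=G_c[A\setminus A',B\setminus T]$ for a suitable transversal $T$ and color $c$, possibly $c$ red as in Example~\ref{ex:ramsey}. The $B$-side conditions are again supplied by strongness, as long as each part $A'$ and $A\setminus A'$ contains enough of the cross-pair witnesses. To keep the $A$-side pairs $2$-reachable, I would choose $(T,A')$ to maximize $|A'|$. An exchange argument, flipping which endpoint of some $e_i$ lies in $T$, then handles a violating pair. I would also use induction on $m$: delete a whole edge $e_i$, which keeps the coloring strong up to reduction via Lemma~\ref{equivreduction} and Lemma~\ref{strong}. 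One then reinserts $b_{2i-1},b_{2i}$, one into each covering subgraph, using the double coverage from $A$ of every pair meeting $e_i$.
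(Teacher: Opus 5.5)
Your main plan cannot succeed, so the contradiction you hope to extract from the failure of $(\ast)$ does not exist. A cover by a star at $a$ in one color together with $G_c[A,N_c(a)]$ in the other uses two \emph{different} colors. But Example~\ref{ex:ramsey} is a member of $G(3)$, and by part (ii) every good cover of it uses two subgraphs of the \emph{same} color. That example is reduced, Type I, and has no special vertex and no nested neighborhoods, so it satisfies every hypothesis you could appeal to. Concretely, for $a=1$ the pair $(2,3)$ has unique common red neighbor $b_{23}\notin N_1(1)$ and unique common blue neighbor $b_{14}\notin N_2(1)$. By symmetry, $(\ast)$ fails for every $a$ in both colors. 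Hence neither Lemma~\ref{nested} nor the double coverage of pairs in $A$ can force $(\ast)$, and what you call the fallback is the entire content of the theorem.

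The fallback does not supply that content.
\begin{itemize}
\item The claim that the $B$-side conditions are ``supplied by strongness'' is exactly what must be proved. Strongness gives two vertices of $T$ (or of $B\setminus T$) a common neighbor somewhere in $A$, not inside the $A$-part of the subgraph containing them. Nothing in ``maximize $|A'|$, then flip an endpoint of some $e_i$'' controls this.
\item The induction on $m$ also breaks. Deleting a matching edge need not preserve Type I: removing $b_{12},b_{34}$ from Example~\ref{ex:ramsey} makes $(1,2)$ red-independent.
\item Reinsertion faces the same problem. The red paths $1\,b_{13}\,3\,b_{23}$ and $2\,b_{24}\,4\,b_{14}$ form a good cover of what remains, yet $b_{12}$ can be added to neither.
\end{itemize}

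The missing idea is the paper's conflict graph $H^a$ on $A$: $u,v$ are adjacent when they have no common red neighbor in $B_1=N_1(a)$. Your $(\ast)$ says exactly that $H^a$ is edgeless.
\begin{itemize}
\item If $H^a$ is bipartite with sides $X_1\ni a$ and $X_2$, then $G_1[X_1,B_1]$ and $G_2[X_2\cup\{a\},B_2]$ form a good cover. Two vertices of $X_2$, being nonadjacent in $H^a$, share a red neighbor $b_{2i-1}\in B_1$ and hence the blue neighbor $b_{2i}$. The remaining pairs are covered through $a$ or by Type I. Your plan is the special case $X_2=\emptyset$.
\item If $H^a$ has a shortest odd cycle $C$, the matching structure shows two things. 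Any two vertices of $C$ are red-covered from $B_2$, and each $b_{2i}$ is red to a majority of $V(C)$. So $G_1[V(C),B_2]$ has diameter at most three.
\item Extend $V(C)$ to a maximal such $A_2$. The vertex $a$ has no red neighbor in $B_2$, so it lies in $A_1=A\setminus A_2$ and covers all pairs of $B_1$.
\item Maximality plus the partner trick gives every pair of $A_1$ a common red neighbor in $B_1$. For $u,v\in A_1$, take witnesses $u',v'\in A_2$. These share a red $b_{2i}\in B_2$, so $u,v$ are blue to $b_{2i}$ and red to $b_{2i-1}$.
\end{itemize}
This yields two red subgraphs, which is exactly the same-color cover that Example~\ref{ex:ramsey} forces.
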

\begin{proof}
Let $a\in A$ be arbitrary. Label the endpoints of the matching edges so that
\[
B_1=N_1(a)=\{b_1,b_3,\dots,b_{2m-1}\}
\quad\text{and}\quad
B_2=N_2(a)=\{b_2,b_4,\dots,b_{2m}\}.
\]
Since each pair $(b_{2i-1},b_{2i})$ is doubly independent, every vertex of $A$ sends edges of opposite colors to its two endpoints. Define a graph $H^a$ on $A$ by joining $u,v\in A$ when they have no common red neighbor in $B_1$. The vertex $a$ is isolated in $H^a$ since every pair containing $a$ is red-covered, and every red neighbor of $a$ lies in $B_1$.

Suppose first that $H^a$ is bipartite, with bipartition $X_1,X_2$. Since $a$ is isolated, we may assume that $a\in X_1$. Any two vertices of $X_1$ are red-covered from $B_1$, and any two vertices of $B_1$ are red-covered through $a$. Hence $G_1[X_1,B_1]$ has diameter at most three. Any two vertices of $X_2$ have a common red neighbor $b_{2i-1}\in B_1$ and therefore have the common blue neighbor $b_{2i}\in B_2$. Also, each $x\in X_2$ is blue-covered with $a$, necessarily from $B_2$, and any two vertices of $B_2$ are blue-covered through $a$. Thus $G_2[X_2\cup\{a\},B_2]$ has diameter at most three, giving a good cover.

Suppose now that $H^a$ is not bipartite. Let
$C=u_1u_2\dots u_{2k+1}u_1$
be a shortest odd cycle in $H^a$, with indices interpreted modulo $2k+1$. In particular, $C$ is induced.

\begin{claim}
The graph $G_1[V(C),B_2]$ has diameter at most three.
\end{claim}

\begin{claimproof}
We first show that every two distinct vertices of $C$ are red-covered from $B_2$. If $u_i,u_j$ are adjacent on $C$, then they have no common red neighbor in $B_1$. Since every pair in $A$ is red-covered, they must be red-covered from $B_2$. If $u_i,u_j$ are not adjacent, then their successors $u_{i+1},u_{j+1}$ are also nonadjacent in $H^a$, so they have a common red neighbor $b_{2\ell-1}\in B_1$. Hence both are blue-adjacent to its matching partner $b_{2\ell}\in B_2$. Since $u_iu_{i+1}\in E(H^a)$, the vertex $b_{2\ell-1}$ cannot also be red-adjacent to $u_i$. Thus $u_i$ is blue-adjacent to $b_{2\ell-1}$ and red-adjacent to $b_{2\ell}$. Similarly, $u_j$ is red-adjacent to $b_{2\ell}$.

Next, every vertex $b_{2i}\in B_2$ has at least $k+1$ red neighbors on $C$. Otherwise it has at least $k+1$ blue neighbors on the cycle, two of which are consecutive. Their edges to $b_{2i-1}$ are then both red, contradicting the fact that consecutive vertices of $C$ are adjacent in $H^a$. Since two subsets of a $(2k+1)$-element set, each of size at least $k+1$, must intersect, every two vertices of $B_2$ have a common red neighbor on $C$. The result now follows from Observation \ref{equicond}.
\end{claimproof}

Choose a maximal set $A_2\supseteq V(C)$ subject to $G_1[A_2,B_2]$ having diameter at most three, and set $A_1=A\setminus A_2$. Since $a$ has no red neighbor in $B_2$, we have $a\in A_1$. Therefore every two vertices of $B_1$ are red-covered through $a$.

It remains to show that every two vertices of $A_1$ have a common red neighbor in $B_1$. Let $u,v\in A_1$. By the maximality of $A_2$, there exist $u',v'\in A_2$ such that $(u,u')$ and $(v,v')$ are not red-covered from $B_2$. Since $G_1[A_2,B_2]$ has diameter at most three, the vertices $u',v'$ have a common red neighbor $b_{2i}\in B_2$. It follows that $u$ and $v$ are both blue-adjacent to $b_{2i}$ and hence both red-adjacent to $b_{2i-1}\in B_1$. Thus $G_1[A_1,B_1]$ has diameter at most three, and the two red subgraphs $G_1[A_1,B_1]$ and $G_1[A_2,B_2]$ form a good cover.
\end{proof}

\subsection{Type II colorings}\label{sec:typeII}
Throughout this subsection, whenever $G=K[A,B]$ has a Type II coloring, we assume without loss of generality that every pair in $A$ is red-covered and every pair in $B$ is blue-covered.

\begin{lemma}\label{type2}
Let $G=K[A,B]$ have a Type II coloring.  If every vertex in one of the partite classes belongs to a mixed pair, then either $G_1$ or $G_2$ has diameter at most three.
\end{lemma}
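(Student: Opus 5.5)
The plan is to use Observation~\ref{equicond}: a bipartite graph with at least one edge has diameter at most $3$ exactly when every two distinct vertices in the same partite class are at distance two. In a Type II coloring every pair in $A$ is already red-covered and every pair in $B$ is already blue-covered. So it suffices to show the following. If every vertex of $A$ lies in a mixed pair, then every pair in $B$ is also red-covered, and $G_1$ has diameter at most three. If every vertex of $B$ lies in a mixed pair, then every pair in $A$ is also blue-covered, and $G_2$ has diameter at most three.

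First I identify what a mixed pair looks like on each side. A pair $a,a'\in A$ is red-covered, so if it is mixed it must be blue-independent. Then $N_2(a)\cap N_2(a')=\emptyset$, which means $N_2(a)\subseteq N_1(a')$. Symmetrically, a mixed pair $b,b'\in B$ is blue-covered and red-independent, so $N_1(b)\subseteq N_2(b')$.

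Now suppose every vertex of $A$ lies in a mixed pair, and take distinct $b,b'\in B$. They are blue-covered, so they have a common blue neighbor $a\in A$. Choose $a'$ with $(a,a')$ mixed. Then $b,b'\in N_2(a)\subseteq N_1(a')$, so $a'$ is a common red neighbor of $b$ and $b'$. Hence every pair in $B$ is red-covered.

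We still need $G_1$ to be a spanning subgraph with at least one edge, so that Observation~\ref{equicond} applies to all of $V(G)$.
\begin{itemize}
\item Each vertex of $A$ is in a pair that is red-covered, so it has a red edge.
\item Take $b\in B$ and any $a\in A$. If $ab$ is red we are done. Otherwise $b\in N_2(a)\subseteq N_1(a')$, where $a'$ is the mixed partner of $a$, so $b$ has a red neighbor.
\end{itemize}
Therefore $G_1$ has diameter at most three.

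The case where every vertex of $B$ lies in a mixed pair is the mirror argument with the colors swapped. Two vertices $a,a'\in A$ have a common red neighbor $b$. Taking the mixed partner $b'$ of $b$, we get $a,a'\in N_1(b)\subseteq N_2(b')$, so $a,a'$ are blue-covered through $b'$. Spanning follows in the same way, and $G_2$ has diameter at most three.

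I expect no step to be a genuine obstacle. The only care needed is with degenerate situations:
\begin{itemize}
\item A class of size one has no pairs to check.
\item Each vertex must have an edge of the relevant color.
\end{itemize}
Both are handled by the observations above.
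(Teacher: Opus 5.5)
Your proof is correct and follows the paper's argument. The key step is the same: take a common blue neighbor $a$ of $b,b'$, note that the mixed partner $a'$ of $a$ is blue-independent from $a$, so $a'$ is red-adjacent to both $b$ and $b'$; Observation~\ref{equicond} then finishes. The paper phrases this as a contradiction showing $B$ has no mixed pairs and handles the other partite class by symmetry, whereas you argue directly, write out the mirror case, and check the edge condition of Observation~\ref{equicond} explicitly, but these differences are cosmetic.
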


\begin{proof}
Without loss of generality suppose that every vertex in $A$ belongs to a mixed pair.  We claim that $B$ has no mixed pairs. Suppose, to the contrary, that $(b_1,b_2)$ is a mixed pair; thus it is blue-covered and red-independent.

Let $a\in A$ be a common blue neighbor of $b_1,b_2$. Since every vertex in $A$ belongs to a mixed pair, there exists $a'\in A$ such that $(a,a')$ is mixed. Every pair in $A$ is red-covered, so $(a,a')$ is blue-independent. Consequently, $a'$ is red-adjacent to both $b_1$ and $b_2$, contradicting the red-independence of $(b_1,b_2)$.

Therefore $B$ has no mixed pairs. Since every pair in $B$ is blue-covered, every pair in $B$ is also red-covered. Every pair in $A$ is red-covered by the Type II assumption, so Observation \ref{equicond} shows that $G_1$ has diameter at most three.
\end{proof}

We now use this lemma to obtain a good cover for every Type II coloring.

\begin{theorem}\label{type2cover}
If $G=K[A,B]$ has a Type II coloring, then $G$ has a good cover by a double star and a subgraph of diameter at most three.
\end{theorem}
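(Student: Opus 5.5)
We reduce to the case where Lemma~\ref{type2} does not apply and then use Lemma~\ref{rootedcover}. If $G_1$ or $G_2$ has diameter at most three, we are done: a whole color class together with any double star is a good cover. We may also assume $G$ has no special vertex (Lemma~\ref{specialcover}) and no proper nested neighborhoods (Lemma~\ref{nested}). By Lemma~\ref{type2}, we may then assume that $A$ contains a vertex $a$ lying in no mixed pair, and $B$ contains a vertex $b$ lying in no mixed pair. Every pair in $A$ is red-covered, so every pair containing $a$ is doubly covered. Likewise every pair in $B$ is blue-covered, so every pair containing $b$ is doubly covered.

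Now look at the edge $(a,b)$. Suppose first that it is red. Set $A_2=N_2(b)$ and $B_2=N_2(a)$; both are nonempty because there are no special vertices. For $x\in A_2$, the pair $(a,x)$ is blue-covered, and the common blue neighbor lies in $N_2(a)=B_2$. For $y\in B_2$, the pair $(b,y)$ is blue-covered, and the common blue neighbor lies in $N_2(b)=A_2$. So the hypotheses of Lemma~\ref{rootedcover} hold. It gives a red double star on $(a,b)$ and a blue subgraph of diameter at most three, which is a good cover.

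If $(a,b)$ is blue, we apply Lemma~\ref{rootedcover} with the colors interchanged; its proof is symmetric in the two colors. Take the blue double star on $(a,b)$, and let $A_1=N_1(b)$ and $B_1=N_1(a)$, both nonempty. For $x\in A_1$, the pair $(a,x)$ is red-covered through some vertex of $N_1(a)=B_1$. For $y\in B_1$, the pair $(b,y)$ is red-covered through some vertex of $N_1(b)=A_1$. Hence $G_1[A_1\cup\{a\},B_1\cup\{b\}]$ has diameter at most three, and together with the blue double star it covers $G$.

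The point needing the most care is the claim that a vertex in no mixed pair has all its pairs doubly covered. In Type II, pairs in $A$ are red-covered, so a pair in $A$ is either doubly covered or mixed (blue-independent). Hence a vertex of $A$ that avoids mixed pairs is doubly covered with every other vertex, and the same argument works in $B$ with blue. Once this is in place, both colorings of $(a,b)$ are handled as above, and in each case one member of the cover is a double star, as the theorem requires.
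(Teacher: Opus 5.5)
Your proof is correct and follows the paper's argument: you reduce via Lemmas \ref{specialcover} and \ref{type2}, then apply Lemma \ref{rootedcover} at a base edge through a vertex that avoids mixed pairs. The only difference is that the paper uses just the vertex $a\in A$ and takes $b$ to be any red neighbor of $a$. The $B$-side hypothesis of Lemma \ref{rootedcover} then follows directly from the Type II condition that all pairs in $B$ are blue-covered, so your second mixed-pair-free vertex $b$, the application of Lemma \ref{type2} to $B$, and the blue-edge case are all unnecessary.
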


\begin{proof}
By Lemma \ref{specialcover}, we may assume that $G$ has no special vertex. By Lemma \ref{type2}, we may also assume that some vertex $a\in A$ is not incident to any mixed pair in $A$ (note that if $G_i$ has diameter at most 3, then $G_i$ together with any edge gives the desired conclusion). Choose $b\in N_1(a)$ and define
\[
A_1=N_1(b),~A_2=N_2(b),~B_1=N_1(a),~\text{and}~B_2=N_2(a).
\]
Since $G$ has no special vertex, $A_2\neq \emptyset$ and $B_2\neq \emptyset$. For every $a'\in A_2$, the pair $(a,a')$ is not mixed and hence is blue-covered; every common blue neighbor of this pair lies in $B_2$. Similarly, for every $b'\in B_2$, the pair $(b,b')$ is blue-covered, and every common blue neighbor lies in $A_2$. Therefore the hypotheses of Lemma \ref{rootedcover} hold, and that lemma gives the desired cover (see Figure \ref{cover1}).
\end{proof}

\begin{proof}[Proof of Theorem~\ref{thm:main}]
Suppose that a minimal counterexample $G$ exists. By Theorem \ref{mincount}~(ii) and (iv), $G$ is reduced and has either a Type I or a Type II coloring. If $G$ has a Type I coloring, then Lemma \ref{strong} applies when $G$ is not strong, while Theorem \ref{type1q1} applies when $G$ is strong. The Type II case is ruled out by Theorem \ref{type2cover}. Hence no minimal counterexample exists, so every $2$-colored complete bipartite graph has a good cover and $f(2)\le3$. Example \ref{ex:lowerbound} gives $f(2)\ge3$ and thus $f(2)=3$.
\end{proof}

\end{document}